\documentclass{amsart}

\usepackage{subfigure}
\usepackage{amsthm}
\usepackage{amsmath}
\usepackage{url}
\usepackage{graphicx}
\usepackage{latexsym}
\usepackage{amsfonts}
\usepackage{amssymb}
\usepackage{verbatim}
\usepackage{hyperref}
\usepackage[font=small,format=plain,labelfont=bf,up,textfont=it,up]{caption}
\usepackage{enumitem}

\newtheorem{theorem}{Theorem}[section]

\newtheorem{definition}[theorem]{Definition}

\def\EL{\mathop{\rm EL}}

\def\ddt{\frac{d}{dt}}
\def\calt{\mathcal{T}}
\def\calu{\mathcal{U}}

\begin{document}



\title{Discrete Extremal Length and Cube Tilings in Finite Dimensions}

\author{William E. Wood \\University of Northern Iowa \\bill.wood@uni.edu}

\date{}

\keywords{discrete conformal geometry, tiling by cubes, extremal length, triple intersection property}

\subjclass[2000]{Primary: 53A30; Secondary: 57Q15, 57M15, 52C26}

\maketitle

\begin{abstract}
Extremal length is a conformal invariant that transfers naturally to the discrete setting, giving square tilings as a natural combinatorial analog of conformal mappings.  Recent work by S. Hersonsky has explored generalizing these ideas to three-dimensional cube tilings.  The connections between discrete extremal length and cube tilings survive the dimension jump, but a condition called the triple intersection property is needed to generalize existence arguments.  We show that this condition is too strong to realize a tiling, thus showing that discrete conformal mappings are far more limited in dimension three, mirroring the classical phenomenon.  We also generalize  results about discrete extremal length beyond dimension three and introduce some necessary conditions for cube tilings.

\end{abstract}

\section{Introduction}

Discrete conformal geometry was essentially born when William Thurston recognized that the existence of circle packings could be interpreted in terms of approximating a Riemann map \cite{andreev,koebe,thurston}.  That idea has since grown into a rich discrete theory mirroring classical conformal geometry.  For details, see e.g. \cite{ken} and the references therein.

 One powerful conformal invariant is extremal length \cite{ahlfors}. For a topological quadrilateral (a topological disk with four vertices on the boundary designated as the vertices of the quadrilateral), extremal length essentially measures the aspect ratio of the rectangle to which it can be mapped conformally and with vertices preserved.  The definition carries to the discrete setting in a natural way, and it too has a nice geometric interpretation.  Schramm and Cannon, Floyd, and Parry \cite{schramm,cfp} showed that the extremal metric is realized by a tiling by squares of a rectangle whose aspect ratio is the extremal length. The circle packing and square tiling models provide alternate paths to discretization of conformal geometry and behave very differently as convergence to the Riemann map and preservation of extremal length separate into geometrically distinct notions in the discrete setting (c.f. \cite{me,me2,dumbbells}).
 
 Extremal length generalizes naturally to higher dimensions. Our goal in this paper is  to pursue the connection between extremal length and square (now cube) tilings beyond two dimensions.  
 
 We should expect some trouble, as classical conformal geometry loses much of its depth in dimension three.  Nonetheless, Hersonsky was able to show in \cite{hers} that three-dimensional cube tilings do indeed realize the extremal length of a discrete parallelepiped, closely following Schramm's approach.   
 
 To prove that a square tiling exists realizing any discrete quadrilateral, Schramm used planarity in an essential way.  Moving to three dimensions, Hersonsky captured this use of planarity in a condition called the triple intersection property.  A cube tiling realizing the extremal length  of a discrete parallelepiped will exist if the triple intersection property is satisfied, whereas no conditions on the graph are necessary in dimension two.
 
 We will show that the triple intersection property is too strong to realize a cube tiling and thus that the two-dimensional arguments will not generalize.  This shows that discrete conformal mappings in dimension three are far more restrictive than in dimension two, mirroring the classical result that three-dimensional conformal mappings are M\"obius transformations.  Nonetheless, cube tilings certainly do exist and we explore some necessary conditions on a graph to realize a tiling.  Along the way we will also generalize some of Schramm's and Hersonsky's arguments to arbitrary finite dimensions.

\section{Cube Tilings and Extremal Metrics}\label{sec:background}

A \emph{discrete box} $\calt= \{G,{B_1,\ldots, B_n}, {B_1', \ldots B_n'}\}$ is a graph $G=(V,E)$ realizing the 1-skeleton of a triangulation of an $n$-dimensional cube whose boundary is the union of faces $\{B_1, B_1', \ldots, B_n,B_n'\}$, each of which is itself a triangulation of an $(n-1)$-dimensional cube and such that $B_i \cap B_i' = \varnothing$ for all integers $0\leq i\leq n$.  $B_1$ and $B_1'$ are distinguished as the \emph{top} and \emph{bottom} faces, respectively, and for general $i$ the faces $B_i$ and $B_i'$ are \emph{opposing} faces.

A \emph{geometric box} $\calu= \{{F_1,\ldots, F_n}, {F_1', \ldots F_n'}\}$ is a rectangular hyperparallelepiped positioned so that its face pairs $F_i$ and $F_i'$ are parallel to the coordinate axes and to each other, all coordinates of all points in $\calu$ are non-negative, and $\calu$ contains the origin as a vertex. The notation for a geometric box is chosen to be compatible with being triangulated by a discrete box with the notation above.

Let $\calt$ be a discrete box and let $\Gamma$ be the set of all vertex paths connecting $B_1$ to $B_1'$.  A \emph{metric} is a function $m: V\to [0,\infty)$.  The volume of a metric $m$ is $\|m\|_n = (\sum_{v\in V} m(v)^n)^{1/n}$. The \emph{length} of a vertex path $\gamma$ in $G$ with respect to a metric $m$ is $\ell_m(\gamma) = \sum_{v\in\gamma}m(v)$.  The length of a metric is $\ell_m=\inf_{\gamma\in\Gamma}\ell_m(\gamma)$ and its normalized length is $\hat{\ell}_m = \ell_m/\|m\|_n$. Define the \emph{extremal length} of $\calt$ to be the largest possible normalized length, i.e. \[\EL(\calt)=
\sup_{m\in\Lambda} \hat{\ell}_m
\]
where $\Lambda$ is the set of metrics with positive volume. 
This definition is analogous to the classical conformally invariant extremal length.

Note that the dimension $n$ and the choice of top and bottom faces are included in the definition of extremal length.  A metric is extremal if it realizes the extremal length.  The following theorem shows that these metrics always exist and are unique up to the scale invariance inherent in the definition.

\begin{theorem}
Every discrete box has a unique  extremal metric of unit volume.
\end{theorem}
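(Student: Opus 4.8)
The plan is to recast the supremum defining $\EL(\calt)$ as a convex minimization problem, so that existence follows from compactness and uniqueness from strict convexity. The starting observation is that $\hat\ell_m$ is homogeneous of degree $0$ in $m$, i.e. $\hat\ell_{cm}=\hat\ell_m$ for $c>0$, which lets me trade the supremum over all positive-volume metrics for a single normalized problem. Since $m\geq 0$, the infimum $\ell_m=\inf_{\gamma\in\Gamma}\ell_m(\gamma)$ is attained on the finite collection of \emph{simple} paths (deleting a repeated vertex never increases $\sum_{v\in\gamma}m(v)$), so $\ell_m$ is a minimum of finitely many linear functionals of $m$, hence continuous and concave.

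First I would introduce the admissible set
\[
A=\{\,m\colon V\to[0,\infty)\mid \ell_m(\gamma)\geq 1\ \text{for all }\gamma\in\Gamma\,\},
\]
an intersection of finitely many closed half-spaces with the positive orthant, hence closed and convex. For any $m$ with $\ell_m>0$ the rescaling $m/\ell_m$ lies in $A$ and satisfies $\hat\ell_m=1/\|m/\ell_m\|_n$, while conversely every $m\in A$ has $\hat\ell_m\geq 1/\|m\|_n$. Combining these gives the identity
\[
\EL(\calt)=\frac{1}{\min_{m\in A}\|m\|_n},
\]
and, more importantly, identifies the extremal metrics as exactly the positive scalar multiples of the minimizers of $\|\cdot\|_n$ over $A$ (a minimizer $m^\ast$ must satisfy $\ell_{m^\ast}=1$, else $m^\ast/\ell_{m^\ast}$ would have strictly smaller volume).

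For existence I would note that $A$ is nonempty (any sufficiently large constant metric works) and closed, and that $\|m\|_n\to\infty$ as $\|m\|\to\infty$, so the continuous, coercive functional $\|\cdot\|_n$ attains its minimum on $A$; here $\min_A\|\cdot\|_n>0$ because a fixed path of $k$ vertices forces $\max_v m(v)\geq 1/k$, whence $\|m\|_n\geq 1/k$. Rescaling the minimizer to unit volume yields an extremal metric of unit volume. (Existence can also be obtained directly, since $\hat\ell_m=\ell_m$ is continuous on the compact set $\{m\geq 0:\|m\|_n=1\}\subset\rr^V$ and therefore attains its maximum.)

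The main work is uniqueness, which I would draw from strict convexity of the volume. Because $t\mapsto t^n$ is strictly convex on $[0,\infty)$ for $n\geq 2$ (the range relevant here, the volume exponent matching the dimension), the map $m\mapsto\sum_v m(v)^n=\|m\|_n^n$ is strictly convex on $\rr^V$. If $m_1\neq m_2$ were two minimizers in $A$, their midpoint $\tfrac12(m_1+m_2)$ would lie in $A$ by convexity yet have strictly smaller volume by the midpoint strict-convexity inequality, a contradiction; hence the minimizer over $A$ is unique. The step I expect to require the most care is bookkeeping the two normalizations—fixed volume versus fixed length—so that uniqueness of the volume-minimizer over $A$ transfers cleanly to uniqueness of the unit-volume extremal metric; the strict convexity itself is routine but genuinely uses $n\geq 2$.
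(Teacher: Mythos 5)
Your proposal is correct, but it proves the theorem by a different mechanism than the paper does. The paper (following Cannon--Floyd--Parry) normalizes metrics to the unit sphere $\{m \geq 0 : \|m\|_n = 1\}$ and maximizes the continuous function $\hat\ell_m = \ell_m$ there: existence comes from compactness of the sphere, and uniqueness ``from convexity'' --- which, spelled out, requires combining the concavity of $\ell_m$ (a minimum of linear functionals) with strict convexity of the $\ell^n$-ball, since the sphere itself is not a convex set. You instead pass to the dual convex program: minimize the volume $\|m\|_n$ over the admissible set $A = \{m \geq 0 : \ell_m(\gamma) \geq 1 \text{ for all } \gamma\}$, prove $\EL(\calt) = 1/\min_A \|\cdot\|_n$, and identify extremal metrics with positive multiples of the minimizers. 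This buys you a cleaner uniqueness step: the feasible set $A$ is convex and the objective $m \mapsto \sum_v m(v)^n$ is strictly convex on the nonnegative orthant, so two distinct minimizers are immediately ruled out by the midpoint inequality, with no need to reason about non-convex spheres. The cost is slightly more bookkeeping (coercivity for existence, the rescaling argument showing minimizers have $\ell_{m^*} = 1$, and the transfer from volume-minimizers to unit-volume extremal metrics), all of which you handle correctly; your parenthetical compactness argument for existence is in fact exactly the paper's. You also make explicit a point the paper's one-line sketch glosses over: strict convexity, hence uniqueness, genuinely requires $n \geq 2$ (for $n=1$ every metric on a path graph is extremal and uniqueness fails), which is a worthwhile observation even though it is invisible in the cited two-dimensional source.
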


\begin{proof}This is a special case of a theorem proved in \cite{cfp} that applies to any set of curves in any finite graph.  One forms a finite-dimensional vector space over the vertices of the graph and recognizes a metric as an element of this vector space.  By normalizing to unit area, searching for an extremal metric amounts to maximizing a continuous function on the unit sphere.  Existence follows from compactness of the sphere and uniqueness from convexity.
\end{proof}

A \emph{cube tiling} of a geometric box $\calu$ by a discrete box $\calt$ is a collection of $n$-dimensional cubes $C_v$, $v\in V$, such that $\cup_{v\in V} C_v = \calu$ and any two cubes that intersect do so along their boundaries.  If two vertices $v$ and $w$ are adjacent in $\calt$, then $C_v \cap C_w \neq \varnothing$.

We now define a convenient tool  connecting cube tilings to their corresponding discrete boxes.  For any point $ p \in F_1$, let $L_{p}$ be the line through $p$ and perpendicular to $F_1$.  Then define the discrete line $\gamma_{ p} = \{v\in V: C_v\cap L_{p} \neq \varnothing\}$. The discrete line defines a vertex path from $B_1$ to $B_1'$.

A cube tiling naturally defines a metric $s$ defined so that $s(v)$ is the side length of the cube $C_v$.  The main connection to extremal length is made by the following.

\begin{theorem}
The metric associated to a cube tiling of a geometric box is an extremal metric for the corresponding discrete box.\label{th:extremal}
\end{theorem}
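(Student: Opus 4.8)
The plan is to compute $\|s\|_n$ and $\ell_s$ explicitly, read off the value of $\hat\ell_s$, and then prove by an integration-and-H\"older argument that no competing metric can exceed it. Set up coordinates so that the direction perpendicular to $F_1$ is the $x_1$-axis; let $H$ be the distance between $F_1$ and $F_1'$ and let $A=\text{vol}_{n-1}(F_1)$ be the $(n-1)$-volume of the base, so $\text{vol}_n(\calu)=HA$. Because the cubes $C_v$ tile $\calu$ and meet only along boundaries, their $n$-volumes sum to that of $\calu$; since $\text{vol}_n(C_v)=s(v)^n$, this yields $\|s\|_n^n=\sum_v s(v)^n=HA$.

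Next I would compute $\ell_s$. For the lower bound, take any $\gamma\in\Gamma$ joining $B_1$ to $B_1'$: the corresponding cubes form a connected chain (adjacent cubes meet), and their orthogonal projections onto the $x_1$-axis are intervals of length $s(v)$ whose union is connected and spans the full $x_1$-extent of $\calu$, forcing $\sum_{v\in\gamma}s(v)\ge H$. For the reverse bound, a generic line $L_p$ meets the cubes of $\gamma_p$ in chords that partition $L_p\cap\calu$ into segments of length exactly $s(v)$, so $\ell_s(\gamma_p)=\sum_{v\in\gamma_p}s(v)=H$. Hence $\ell_s=H$ and $\hat\ell_s=H/(HA)^{1/n}=H^{(n-1)/n}/A^{1/n}$.

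The heart of the proof is the bound $\hat\ell_m\le\hat\ell_s$ for an arbitrary metric $m$. Since each discrete line $\gamma_p$ lies in $\Gamma$, we have $\ell_m\le\ell_m(\gamma_p)=\sum_{v\in\gamma_p}m(v)$ for every $p\in F_1$. Integrating over the base and interchanging sum and integral (Tonelli, as all terms are nonnegative),
\[
A\,\ell_m\le\int_{F_1}\sum_{v\in\gamma_p}m(v)\,dp=\sum_v m(v)\,\text{vol}_{n-1}\{p\in F_1:C_v\cap L_p\neq\varnothing\}.
\]
The set appearing on the right is the orthogonal shadow of the axis-aligned cube $C_v$ on $F_1$, an $(n-1)$-cube of side $s(v)$ and measure $s(v)^{n-1}$. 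Applying H\"older's inequality with exponents $n$ and $n/(n-1)$,
\[
A\,\ell_m\le\sum_v m(v)\,s(v)^{n-1}\le\|m\|_n\Big(\sum_v s(v)^n\Big)^{(n-1)/n}=\|m\|_n\,(HA)^{(n-1)/n}.
\]
Dividing by $A\|m\|_n$ gives $\hat\ell_m\le(HA)^{(n-1)/n}/A=H^{(n-1)/n}/A^{1/n}=\hat\ell_s$, so $s$ realizes the supremum defining $\EL(\calt)$ and is extremal.

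I expect the main obstacle to be the careful justification of the two geometric measurements on which everything rests: the chord/projection argument establishing $\ell_s=H$, where one must handle the measure-zero family of lines passing through shared cube faces, and the identification of the shadow of $C_v$ as an $(n-1)$-cube of measure exactly $s(v)^{n-1}$. Both depend on the axis-alignment of the cubes and on the compatibility between the boundary faces $B_i$ of $\calt$ and the geometric faces $F_i$ of $\calu$ (in particular that the cubes of $B_1$ and $B_1'$ rest on $F_1$ and $F_1'$), which I would extract from the definition of a cube tiling.
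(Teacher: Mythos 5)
Your proof is correct and takes essentially the same route as the paper's: integrate the pointwise inequality $\ell_m \le \ell_m(\gamma_p)$ over the base face, identify the resulting integral with $\sum_{v} m(v)\,s(v)^{n-1}$ via the cube shadows, and finish with H\"older's inequality with exponents $n$ and $n/(n-1)$. The differences are only refinements of the same argument: you compare $s$ against an arbitrary metric $m$ rather than an extremal one (which slightly streamlines the final logical step), and you explicitly verify $\ell_s = H$ and $\|s\|_n^n = HA$, facts the paper asserts after normalizing the volume to one.
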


\begin{proof} We generalize arguments from \cite{schramm} and \cite{hers}.   Let $\calu$ be the geometric box $[0,h_1]\times\cdots\times[0,h_n]$ with cube tiling $\{C_v\}$ and associated discrete box $\calt$ and metric $s$.  Let $m$ be extremal for $\calt$.  Assume we have rescaled so that $h_1 h_2 \cdots h_n = 1$ and thus $\|s\|_n = 1$.

By construction, $\ell_m \leq \ell_m(\gamma_p)$ for any $p\in F_1=[0,h_1]\times[0,h_2]\times\cdots\times[0,h_{n-1}]$.  Integrating both sides over $F_1$ with respect to Lebesgue measure $\mu$ gives $$\int_{F_1} \ell_m \ d\mu \leq \int_{F_1} \ell_m(\gamma_p) \ d\mu = \int_{F_1} \sum_{v\in\gamma_p} m(v) \ d\mu. $$
The left-hand side is constant.  To evaluate the integral on the right, note that $v \in \gamma_p$ if and only if $p$ lies in the $(n-1)$-dimensional shadow of $C_v$ in $F_1$.  In other words, $C_v$ contributes its measure in proportion to the volume $s(v)^{n-1}$ of one of its faces. Noting also that the volume of $F_1$ is $h_1 h_2\cdots h_{n-1}$, we have
$$\ell_m h_1 h_2\cdots h_{n-1} \leq \sum_{v\in V} m(v) s(v)^{n-1}.$$
Now employ H\"older's Inequality with $p=n$, $q=\frac{n}{n-1}$, giving
$$\ell_m h_1 h_2\cdots h_{n-1}  \leq(\sum_{v\in V} m(v)^n)^{1/n} (\sum_{v\in V} (s(v)^{n-1})^{\frac{n}{n-1}})^{\frac{n-1}{n}} = \| m \|_n \| s \|_n^{n-1} =\| m \|_n.$$

  We also have $\|s\|_n = h_1 h_2\cdots h_{n} $ and $\ell_s = h_n$, so
$$\hat\ell_m = \frac{\ell_m}{\| m \|_n} \leq \frac{ \| m \|_n}{h_1 h_2\cdots h_{n-1}\|m\|_n} = \frac{1}{h_1 h_2\cdots h_{n-1}}=\frac{\ell_s}{\|s\|_n} = \hat\ell_s.$$
Since $m$ is extremal, $\hat\ell_m$ is maximal and thus so too must be $\hat\ell_s$.  Therefore $s$ is an extremal metric for $\calu$. \end{proof}

\section{Existence of Cube Tilings: Sufficiency}\label{sec:sufficiency}

We would like a converse to Theorem~\ref{th:extremal} that constructs a cube tiling from an extremal metric.  Schramm proved in \cite{schramm} that in two dimensions any extremal metric will indeed yield a square tiling and moreover found an algorithm for its construction.  The proof depends on an inequality that is trivially satisfied in the planar setting but does not clearly generalize.  Hersonsky \cite{hers} developed a condition to serve as a proxy for planarity in a three-dimensional version of Schramm's proof.  We will show that this condition is too restrictive to admit a cube tiling in any non-planar case. Since our results will be negative, it is sufficient to restrict our attention to the case $n=3$ as the trouble will clearly extend to higher dimensions.

An important tool will be metrics that are extremal except along a path $\gamma$ which is given additional weight.  For an extremal metric $m$, path $\gamma$, and $t\geq 0$, define
\begin{displaymath}
   m_{\gamma,t}(v) = \left\{
     \begin{array}{lcr}
       m(v)+t & : & v \in \gamma\\
       m(v) & : &v \notin \gamma.
     \end{array}
   \right.
\end{displaymath} 
The following condition isolates the key step.
\begin{definition} A discrete  box satisfies \emph{Schramm's condition} if for all $1\leq i \leq n$
there exist shortest paths  $\alpha_i$ connecting $B_i$ to $B_i'$ satisfying the inequality $$\ddt(\ell_{m_{\alpha_i, t}})\big|_{t=0^+} \geq 1.$$
\end{definition}

This inequality can be used to show that shortest paths connecting opposing faces are long enough to reach the other side of the geometric box we are trying to tile.  Topological arguments and a volume calculation then show that the cubes must form a tiling.

\begin{theorem}
A discrete box satisfying Schramm's condition admits a tiling by cubes whose side lengths are determined by the extremal metric.
\end{theorem}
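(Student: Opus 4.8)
The plan is to follow Schramm's constructive strategy, manufacturing the cubes directly from the extremal metric $m$ (normalized so that $\|m\|_n = 1$) and then verifying the three defining properties of a tiling: that the cubes fill a geometric box $\calu = [0,h_1]\times\cdots\times[0,h_n]$, that any two meet only along their boundaries, and that adjacency in $G$ forces contact. Each cube will have side length $s(v) = m(v)$, so the only freedom is its position, which I would pin down by a coordinate function $x_i : V \to [0,h_i]$ in each direction, placing $C_v = \prod_{i=1}^n [x_i(v), x_i(v)+s(v)]$. In the distinguished direction I would let $d(v)$ be the shortest $m$-path length from $B_1$ to $v$ and give $C_v$ the extent $[d(v)-s(v), d(v)]$, so that consecutive cubes along a minimizing path abut without gap or overlap; the transverse coordinates would be defined analogously from distances to the faces $B_i$.

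First I would establish that the construction is exact in the distinguished direction. Every minimizing path in $\Gamma$ has the common length $\ell_m$, and the equality analysis inside the proof of Theorem~\ref{th:extremal} shows this common value equals the extent of $\calu$ in the distinguished direction; hence the cubes along any such geodesic stack consecutively and fill the corresponding column from one distinguished face to the other. The measure bookkeeping from that same proof --- that $v \in \gamma_p$ precisely when $p$ lies in the shadow of $C_v$, which then contributes area $s(v)^{n-1}$ --- shows moreover that the shadows of the cubes cover the base $F_1$ with each point counted exactly once, so no waste or overcounting occurs transverse to the distinguished direction either.

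Schramm's condition enters in controlling the remaining directions. A standard first-variation computation on the finite graph evaluates $\ddt(\ell_{m_{\alpha_i,t}})\big|_{t=0^+}$: for small $t$ the perturbed minimizer remains among the finitely many unperturbed geodesics, so the right derivative equals the least number of vertices that any minimizing path shares with $\alpha_i$. The hypothesis that this quantity is at least $1$ therefore says $\alpha_i$ cannot be avoided by the geodesic structure, and I would convert this crossing statement into the geometric claim that the cubes lying along $\alpha_i$ reach from the face $x_i = 0$ to the face $x_i = h_i$, so that no gap can open transverse to direction $i$. In dimension two this spanning is automatic from planarity; here Schramm's condition is precisely the surrogate that supplies it, and a discrete separation argument would then promote the local spanning of each $\alpha_i$ to the global statement that the cubes leave $\calu$ entirely covered.

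Finally I would close with a volume count. Since $\sum_{v\in V} s(v)^n = \|s\|_n^n = h_1 h_2 \cdots h_n$ is exactly the volume of $\calu$, once the cubes are known to have pairwise disjoint interiors and to lie inside $\calu$, the equality of total volume with that of $\calu$ forces them to cover $\calu$, and covering together with disjoint interiors is a tiling; the adjacency clause follows from assigning neighboring vertices abutting coordinates. The main obstacle, where essentially all the difficulty concentrates, is the middle step: turning the derivative inequality of Schramm's condition into genuine transverse spanning without introducing overlaps, and then stitching the per-direction spanning into a single global tiling. This is exactly the point at which Schramm's planar argument used two-dimensionality in an essential way, so the heart of the proof is checking that the condition really does everything planarity did.
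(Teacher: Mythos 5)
Your outline assembles the right ingredients---cube side lengths from the extremal metric, placement by shortest-path coordinates, the first-variation reading of Schramm's condition (your claim that the right derivative equals the least number of vertices a minimizing top-to-bottom path shares with $\alpha_i$ is correct, and matches the paper's own remark), and a closing volume count---but as a proof it has two genuine holes. First, a circularity: your second paragraph borrows the ``measure bookkeeping'' from the proof of Theorem~\ref{th:extremal}, but that theorem runs in the opposite direction. The discrete lines $\gamma_p$, the statement that $v\in\gamma_p$ exactly when $p$ lies in the shadow of $C_v$, and the claim that the shadows cover $F_1$ with multiplicity one are all properties of an already existing tiling; they are precisely what you are trying to construct, so they cannot be invoked to establish that the cubes stack exactly over each geodesic and cover the base.

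Second, the step where the entire difficulty lives is announced rather than performed: ``I would convert this crossing statement into the geometric claim that the cubes lying along $\alpha_i$ reach from $x_i=0$ to $x_i=h_i$,'' and ``a discrete separation argument would then promote'' this to a global tiling---you concede as much in your final paragraph. Symptomatically, extremality of $m$ is never actually used except through the circular appeal above, yet the statement is false for a generic metric. In Schramm's and Hersonsky's arguments (\cite{schramm}, \cite{hers}, which is where this paper itself sends the reader rather than supplying a proof), the engine is the pairing of two bounds on the same derivative: Schramm's condition gives $\ddt(\ell_{m_{\alpha_i,t}})\big|_{t=0^+}\geq 1$ from below, while extremality gives an upper bound, since $\hat\ell_{m_{\alpha_i,t}}\leq\hat\ell_m$ for all $t\geq 0$ forces $\ddt(\ell_{m_{\alpha_i,t}})\big|_{t=0^+}\leq \ell_m\sum_{v\in\alpha_i}m(v)^{n-1}/\|m\|_n^n$. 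Comparing the two yields $\sum_{v\in\alpha_i}m(v)^{n-1}\geq\|m\|_n^n/\ell_m$, the quantitative statement that the cross-path $\alpha_i$ carries enough $(n-1)$-dimensional volume to span the box transversally; identities of this kind, not the crossing property alone, are what make the cubes fit without gaps or overlaps. Your volume count also cannot close the argument as stated: it requires the cubes to lie in $\calu$ with pairwise disjoint interiors, and it requires $h_1\cdots h_n=\|s\|_n^n$, but the transverse dimensions $h_2,\dots,h_n$ are never defined from the combinatorics, let alone related to the metric. Without these pieces the proposal is a correct plan of attack, not a proof.
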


Schramm first presented the details of this argument in dimension two in \cite{schramm}.  Hersonsky was able to generalize the rest of Schramm's proof to dimension three in \cite{hers} (with the added assumption that the top and bottom faces be squares), and indeed the arguments could be generalized to finite dimensions with some extra bookkeeping.  The problem is thus reduced to satisfying Schramm's condition.

To meet Schramm's condition,  any shortest path connecting the top to the bottom must intersect each of the $\alpha_i$.  To see this, note that $\ell_m$ captures the shortest of all such paths and $\ell_{m_{\alpha, t}}$ will not depend on $t$ if any one top-to-bottom path misses $\alpha$, obliterating the derivative.

Schramm's condition is trivially met in dimension two, since any path connecting two opposing sides of a topological quadrilateral will necessarily intersect any path connecting the other two sides.  This is how Schramm used planarity in his proof.  In  generalizing this work to dimension three, Hersonsky created the following condition.

\begin{definition} A discrete three-dimensional box satisfies the \emph{triple intersection property} if for $i=2,3$ there exist shortest paths $\alpha_i$ connecting $B_i$ to $B_i'$ meeting all shortest paths connecting $B_1$ to $B_1'$.
\end{definition}

Unfortunately, our next result shows that this condition is too strong to realize a tiling.

\begin{theorem} The discrete three-dimensional box associated to a cube tiling cannot satisfy the triple intersection property.\label{th:tip}
\end{theorem}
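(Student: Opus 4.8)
The plan is to argue by contradiction: assume the discrete box $\calt$ coming from a cube tiling of $\calu=[0,h_1]\times[0,h_2]\times[0,h_3]$ does satisfy the triple intersection property, and let $s$ be the tiling metric. Orient things so that $B_1,B_1'$ are the faces $x_3=h_3,\,x_3=0$, the pair $B_2,B_2'$ the faces $x_1=0,\,x_1=h_1$, and the pair $B_3,B_3'$ the faces $x_2=0,\,x_2=h_2$, consistent with $F_1=[0,h_1]\times[0,h_2]$. The engine of the proof is a competition between two facts about the path $\alpha_2$ supplied by the property: it must be short (it is a shortest $B_2$-to-$B_2'$ path) yet its cubes must cast a large shadow (they must meet every vertical line). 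Reconciling these will force a single cube to span an opposing pair of faces, which is forbidden by the axiom $B_3\cap B_3'=\varnothing$.

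First I would record that for almost every $p\in F_1$ the discrete line $\gamma_p$ is a shortest $B_1$-to-$B_1'$ path: the vertical line $L_p$ meets a stack of cubes whose side lengths sum to $h_3$, so $\ell_s(\gamma_p)=h_3=\ell_s$ by the $n=3$ case of Theorem~\ref{th:extremal}. By the triple intersection property $\alpha_2$ meets every such $\gamma_p$. Since $v\in\gamma_p$ precisely when $L_p$ pierces $C_v$, i.e.\ when $p$ lies in the vertical shadow of $C_v$, the shadows of the cubes along $\alpha_2$ cover $F_1$ up to a null set, yielding the lower bound $\sum_{v\in\alpha_2}s(v)^2\ge \area(F_1)=h_1h_2$.

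Next I would pin down the length of $\alpha_2$. The reasoning of Theorem~\ref{th:extremal}, now applied with $B_2,B_2'$ as the distinguished faces, shows the shortest $B_2$-to-$B_2'$ distance is $h_1$: horizontal lines give paths of length $h_1$, and along any such path the $x_1$-projections of consecutive, touching cubes overlap to cover $[0,h_1]$, so the side lengths sum to at least $h_1$. Hence $\sum_{v\in\alpha_2}s(v)=h_1$. Because each cube lies inside $\calu$ we also have $s(v)\le\min(h_1,h_2,h_3)\le h_2$, so
\[
h_1h_2\ \le\ \sum_{v\in\alpha_2}s(v)^2\ \le\ h_2\sum_{v\in\alpha_2}s(v)\ =\ h_1h_2 .
\]
Equality throughout forces $s(v)\in\{0,h_2\}$ for every $v\in\alpha_2$, and since the side lengths sum to $h_1>0$, at least one cube $C_{v_0}$ has side exactly $h_2$.

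Finally I would extract the contradiction. An axis-aligned cube of side $h_2$ sitting inside $\calu$ has $x_2$-extent an interval of length $h_2$ inside $[0,h_2]$, hence equal to all of $[0,h_2]$; so $C_{v_0}$ touches both $F_3$ (at $x_2=0$) and $F_3'$ (at $x_2=h_2$). Under the face correspondence built into the tiling this places $v_0$ in both $B_3$ and $B_3'$, contradicting $B_3\cap B_3'=\varnothing$. The step I expect to be the real crux is the equality analysis: extracting from the two competing bounds that the extremal configuration is a cube of the maximal admissible side, and then converting the purely metric statement ``a cube spans the full width'' into the combinatorial statement $v_0\in B_3\cap B_3'$. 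Justifying that a cube meeting a boundary face must correspond to a vertex on that face (the reverse direction of the tiling--box dictionary), and disposing of the measure-zero exceptional lines in the shadow-covering step, are the two places where I would need to argue with care rather than by formula.
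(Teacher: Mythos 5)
Your proof is correct, and its skeleton matches the paper's: both exploit the triple intersection property through the discrete lines $\gamma_p$ (which are shortest $B_1$-to-$B_1'$ paths for generic $p$) to conclude that the shadows of the cubes along $\alpha_2$ must cover $F_1$, and both terminate in the same contradiction --- a single cube spanning from $F_3$ to $F_3'$, violating $B_3\cap B_3'=\varnothing$. (Like the paper, you never need the second path; the paper remarks on this explicitly after its proof.) Where you genuinely diverge is the middle step. The paper argues qualitatively: the projections of $\alpha$'s cubes onto $F_1$ form ``a path of squares whose shared sides are all parallel because $\alpha$ is a shortest path,'' and then asserts that the only way such squares can fill a rectangle is as congruent squares laid in a line; both geometric claims are left unjustified and would take some work to make precise. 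You replace them with an inequality chain: covering gives $\sum_{v\in\alpha_2}s(v)^2\ge h_1h_2$, shortness of $\alpha_2$ gives $\sum_{v\in\alpha_2}s(v)\le h_1$ (a generic horizontal discrete line is a competitor of length $h_1$; note only this upper bound is actually needed), and $s(v)\le h_2$ gives $\sum_{v\in\alpha_2} s(v)^2\le h_2\sum_{v\in\alpha_2} s(v)$, after which equality analysis produces a cube of side exactly $h_2$. This is more elementary and more rigorous --- it sidesteps the classification of square packings entirely --- and it visibly generalizes to dimension $n$ by replacing $s(v)^2$ with $s(v)^{n-1}$ and bounding $s(v)^{n-2}\le h_2\cdots h_{n-1}$, in the spirit of the paper's closing remark that the argument carries to higher dimensions. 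The two caveats you flag --- disposing of the measure-zero set of exceptional lines, and the dictionary ``$C_v$ meets $F_i$ if and only if $v\in B_i$'' --- are genuine, but they are precisely the conventions the paper itself relies on (its proof of Theorem~\ref{th:extremal} uses discrete lines as shortest paths, and its conclusion that a cube ``connects $F_3$ to $F_3'$'' contradicts $B_3\cap B_3'=\varnothing$ uses the dictionary), so your proof is on equal footing with the paper's on both points.
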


\begin{proof} Let $\{C_v\}$ be a cube tiling of three-dimensional geometric box $\calu$ by discrete box $\calt$.  Suppose $\alpha$ and $\beta$ are shortest paths in $\calt$ connecting $B_2$ to $B_2'$ and  $B_3$ to $B_3'$, respectively, so that every  shortest path from  $B_1$ to $B_1'$ intersects both $\alpha$ and $\beta$.  

Consider the orthogonal projection of the cubes corresponding to vertices in $\alpha$ onto $F_1$.  This is a path of squares whose shared sides are all parallel because $\alpha$ is a shortest path.  The union of these squares must be all of $F_1$.  To see this, consider some point $p\in F_1$ not in one of these squares.  Then the discrete line $\gamma_p$ misses $\alpha$, contradicting the triple intersection property.

The only way a collection of squares whose intersecting edges are parallel can fill the rectangular $F_1$ is if they are congruent squares laid in a line, in this case connecting $F_2$ to $F_2'$.  But this violates the assumption that $B_3$ and $B_3'$ are disjoint, since this configuration connects $F_3$ to $F_3'$ with a single cube.
\end{proof}

Notice that our proof  never referenced the path $\beta$, meaning that requiring even  one cross-path to hit all top-bottom paths is prohibitive.  The argument clearly carries to higher dimensions as well.

The upshot of Theorem~\ref{th:tip} is that Schramm's condition, while trivially satisfied in dimension two, is too strong to induce analogous tilings in higher dimensions.  Since this condition captured a key piece of the argument, we see that higher dimensional discrete conformal geometry will have different behavior and requrire different tools  than in two dimensions.

\begin{figure}
\begin{center}
\includegraphics[width=3in]{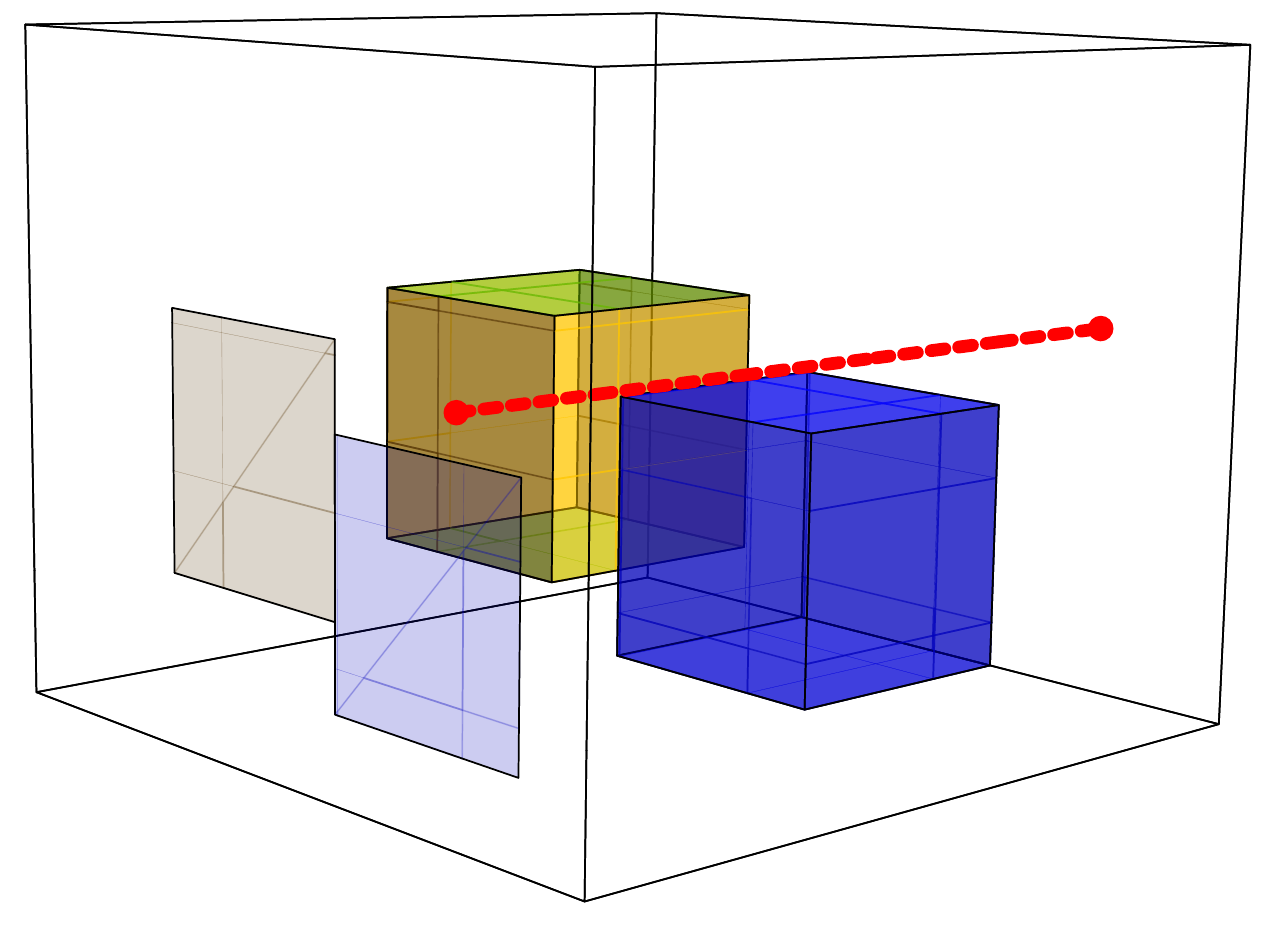}
\end{center}
\caption{The proof of Theorem~\ref{th:tip}.  The shadow of a shortest path on the bottom face (depicted here in the front) shows how to find a discrete line missing the path.}
\end{figure}

\section{Existence of Cube Tilings: Necessity}\label{sec:necessity}

We can  explicitly construct cube tilings.  For example, start with a geometric box and decimate it with equally spaced hyperplanes in all directions to form a grid of congruent cubes.  Then find collections of these cubes whose union is another smaller geometric box and glue the cuts back.  Repeating this process indefinitely on these smaller boxes can produce cube tilings with great combinatorial complexity. The contacts graph of such tilings (perhaps with some edges and null-weight vertices added to make it a triangulation)   will give extremal metrics by Theorem~\ref{th:extremal}, but there is no known method to recover the tiling from the combinatorics.  We close with a collection of necessary conditions for cube tiling.

\begin{theorem}\label{th:existence} Let $\calt = \{G, B_1,\ldots, B_n, B_1',\ldots, B_n\}$, $n\geq 3$,  be a geometric box with extremal metric $m$.  The following are necessary conditions for  $\calt$ to realize a cube tiling of a unit-volume geometric box $\calu = [0,h_1]\times\cdots\times[0,h_n]$.
\begin{enumerate}
\item \label{ec:cong} $\EL(B_i) = \EL(B_i')$ for all integers $1\leq i \leq n$, with $B_i, B'_i$ considered as discrete $(n-1)$-boxes with top/bottom faces chosen consistently.
\item \label{ec:prod}Let $\sigma_i$ be any permutation of $\{1,\ldots,n\}$ mapping $1$ to $i$ and define the discrete box $\calt_i= \{G,B_{\sigma_i(1)},\ldots, B_{\sigma_i(n)},B'_{\sigma_i(1)},\ldots, B'_{\sigma_i(n)}\}$.  That is, the $\calt_i$ are all possible choices of top/bottom faces for $\calt$.  Then $$\EL(\calt_1)\cdot\EL(\calt_2)\cdots\EL(\calt_n) = 1.$$
\item\label{ec:perfect} There exist distinct vertices $v$ and $w$ in $G$ with $m(v)=m(w)>0$.
\end{enumerate}
\end{theorem}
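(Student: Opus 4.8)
The plan rests on identifying the extremal metric $m$ with the side-length metric $s$ of the tiling. Because $\calu$ has unit volume we have $\|s\|_n = 1$, and Theorem~\ref{th:extremal} together with the uniqueness of the unit-volume extremal metric forces $m = s$. The computation that drives everything is $\EL(\calt_i) = h_i$ for each $i$: the same family $\{C_v\}$ is a cube tiling of $\calu$ no matter which opposing pair is called top and bottom, so designating $B_i, B_i'$ (via any $\sigma_i$ with $\sigma_i(1)=i$) makes $s$ extremal for $\calt_i$ by Theorem~\ref{th:extremal}. Since $\EL$ depends only on the chosen top and bottom, its value does not see the rest of $\sigma_i$, and repeating the length computation in the proof of Theorem~\ref{th:extremal} in the $i$-th coordinate direction gives $\hat\ell_s = h_i$. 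Condition~(\ref{ec:prod}) is then immediate: $\prod_{i=1}^n \EL(\calt_i) = \prod_{i=1}^n h_i = \mathrm{vol}(\calu) = 1$.

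For condition~(\ref{ec:cong}) I would restrict the tiling to the boundary. The cubes meeting a face $F_i$ of $\calu$ have $(n-1)$-dimensional faces lying on $F_i$, and because the $C_v$ tile $\calu$ these faces tile the rectangle $F_i = \prod_{j\ne i}[0,h_j]$, realizing the discrete $(n-1)$-box $B_i$. Applying Theorem~\ref{th:extremal} in dimension $n-1$ shows the induced side-length metric is extremal for $B_i$ and that $\EL(B_i)$ equals the geometric extremal length of $F_i$ in the chosen direction. The opposing faces $F_i$ and $F_i'$ of a rectangular box are congruent rectangles, so with top and bottom chosen consistently (the same coordinate direction for both) they have equal geometric extremal length; hence $\EL(B_i) = \EL(B_i')$.

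Condition~(\ref{ec:perfect}) carries the real content and is the main obstacle. Since $m = s$, the claim is that two positive side lengths must coincide, i.e. that no cube tiling of a box with $n \ge 3$ is \emph{perfect} (all side lengths distinct). I would argue by induction on $n$. For the base case $n=3$, I would run the classical infinite-descent argument: the cubes resting on the bottom face induce a perfect squared tiling of that $2$-dimensional face, whose smallest square $S_1$ lies in the interior and is flanked on all sides by strictly larger squares. The taller neighboring cubes then rise above the cube $C_1$ sitting on $S_1$, enclosing its top face in a rectangular well; this well must be filled by strictly smaller cubes, whose smallest member $C_2$ is again interior to the well floor and strictly smaller than $C_1$. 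Iterating produces an infinite strictly decreasing sequence of distinct cubes, contradicting finiteness of the tiling.

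For the inductive step $n \ge 4$, a perfect cube tiling would restrict on any boundary face to a perfect tiling of an $(n-1)$-box by $(n-1)$-cubes—the faces of the cubes meeting $F_i$ tile $F_i$, and their side lengths are distinct because the original cubes are—contradicting the impossibility of perfect cubings in dimension $n-1 \ge 3$. Thus two positive cubes must share a side length. The delicate point throughout is the surrounding lemma used in the base case, that the smallest square of a perfect squared rectangle is interior; this is exactly what fails in dimension two, where perfect squared rectangles exist, and it is what accounts for the hypothesis $n \ge 3$.
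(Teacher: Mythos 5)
Your proposal is correct and follows essentially the same route as the paper: parts (\ref{ec:cong}) and (\ref{ec:prod}) are argued exactly as there (opposing faces are congruent and inherit $(n-1)$-dimensional tilings; $\EL(\calt_i)$ equals the edge length of $\calu$ perpendicular to $F_i$ by Theorem~\ref{th:extremal}, so the product is the unit volume), with your identification $m=s$ via uniqueness of the unit-volume extremal metric making explicit what the paper leaves implicit. For part (\ref{ec:perfect}), where the paper simply cites Brooks--Smith--Stone--Tutte, you supply the standard infinite-descent proof of that cited theorem together with the face-restriction induction for $n\geq 4$, so this too is the same argument with the black box opened.
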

\begin{proof}

\begin{enumerate}
\item Opposing faces of a geometric box must be congruent, so this result follows by viewing the faces of $\calt$ as discrete boxes and the tiled faces of $\calu$ as $(n-1)$-dimensional cube tilings.
\item Theorem~\ref{th:extremal} shows that extremal length of a cube tiling is the distance between the top and bottom faces in the tiling, i.e. the side length of an edge connecting these faces.  The product of all such side lengths is the volume, which we assumed is one.
\item This repurposes the fact that there are no perfect cube tilings (tilings with no congruent cubes) to the setting of extremal length.  See \cite{tutte}.

\end{enumerate}
\end{proof}

Theorem~\ref{th:existence} suggests approaches to working with higher-dimensional extremal length by exploiting properties of geometric boxes and their tilings.  The first two parts are  compatibility conditions  relating extremal length of a discrete box   to the extremal lengths and volumes of its faces.  They are relevant to the three-dimensional case because two-dimensional square tilings always exist and are computable.  Part~\ref{ec:perfect} suggests how geometric and combinatorial  properties of cube tilings can be interpreted as statements about extremal length.


\end{document}